\theoremstyle{plain}
\newtheorem{thm}{Theorem}[section]% subsection
\newtheorem{cor}[thm]{Corollary}
\newtheorem{prop}[thm]{Proposition}
\newtheorem{lem}[thm]{Lemma}
\theoremstyle{definition}
\theoremstyle{remark}
\newtheorem{rem}[thm]{Remark}
\numberwithin{equation}{subsection}
\renewcommand{\bold}[1]{\medskip \noindent {\bf #1 }\nopagebreak}
\newcommand{\nc}{\newcommand}
\newcommand{\rnc}{\renewcommand}
\newcommand{\e}{\varepsilon}
\nc\bA{\mathbb{A}}
\nc\bB{\mathbb{B}}
\nc\bC{\mathbb{C}}
\nc\bD{\mathbb{D}}
\nc\bE{\mathbb{E}}
\nc\bF{\mathbb{F}}
\nc\bG{\mathbb{G}}
\nc\bH{\mathbb{H}}
\nc\bI{\mathbb{I}}
\nc{\bJ}{\mathbb{J}} 
\nc\bK{\mathbb{K}}
\nc\bL{\mathbb{L}}
\nc\bM{\mathbb{M}}
\nc\bN{\mathbb{N}}
\nc\bO{\mathbb{O}}
\nc\bP{\mathbb{P}}
\nc\bQ{\mathbb{Q}}
\nc\bR{\mathbb{R}}
\nc\bS{\mathbb{S}}
\nc\bT{\mathbb{T}}
\nc\bU{\mathbb{U}}
\nc\bV{\mathbb{V}}
\nc\bW{\mathbb{W}}
\nc\bY{\mathbb{Y}}
\nc\bX{\mathbb{X}}
\nc\bZ{\mathbb{Z}}
\nc\cA{\mathcal{A}}
\nc\cB{\mathcal{B}}
\nc\cC{\mathcal{C}}
\nc\cD{\mathcal{D}}
\nc\cE{\mathcal{E}}
\nc\cF{\mathcal{F}}
\nc\cG{\mathcal{G}}
\nc\cH{\mathcal{H}}
\nc\cI{\mathcal{I}}
\nc{\cJ}{\mathcal{J}} 
\nc\cK{\mathcal{K}}
\nc\cL{\mathcal{L}}
\nc\cM{\mathcal{M}}
\nc\cN{\mathcal{N}}
\nc\cO{\mathcal{O}}
\nc\cP{\mathcal{P}}
\nc\cQ{\mathcal{Q}}
\nc\cR{\mathcal{R}}
\nc\cS{\mathcal{S}}
\nc\cT{\mathcal{T}}
\nc\cU{\mathcal{U}}
\nc\cV{\mathcal{V}}
\nc\cW{\mathcal{W}}
\nc\cY{\mathcal{Y}}
\nc\cX{\mathcal{X}}
\nc\cZ{\mathcal{Z}}
\nc{\dmo}{\DeclareMathOperator}
\rnc{\Re}{\operatorname{Re}}
\rnc{\Im}{\operatorname{Im}}
\dmo{\Isom}{Isom}
\newcommand{\dzero}[1]{\left.\frac{d#1}{dt}\right|_{t=0}}
\newcommand{\dbar}{\overline\partial}
\newcommand{\norm}[1]{\left\Vert #1 \right\Vert}
\newcommand{\abs}[1]{\left| #1 \right|}
\let\from\colon
\let\C\bC
\dmo{\Teich}{Teich}
\let\teich\Teich
\dmo{\hodge}{Hodge}
\nc{\delz}{\frac{\partial}{\partial z}}
\nc{\delbar}{\overline{\partial}}
\newcommand{\dzbar}{\overline{dz}}
\title{Hodge and Teichm\"uller}
\author[Kahn]{Jeremy~Kahn}
\author[Wright]{Alex~Wright}
\begin{document}

\maketitle
% removes page number from first page
\thispagestyle{empty}

\begin{abstract}
We consider the derivative $D\pi$ of the projection $\pi$ from a stratum of Abelian or quadratic differentials to Teichm\"uller space. A closed one-form $\eta$ determines a relative cohomology class $[\eta]_\Sigma$, which is a tangent vector to the stratum. We give an integral formula for the pairing of of $D\pi([\eta]_\Sigma)$ with a cotangent vector to Teichm\"uller space (a quadratic differential). 

We derive from this a comparison between Hodge and Teichm\"uller norms, which has been used in the work of Arana-Herrera on effective dynamics of mapping class groups, and which may clarify the relationship between dynamical and geometric hyperbolicity results in Teichm\"uller theory.   
\end{abstract}

\section{Introduction}

\bold{The derivative of the projection.} Each stratum $\cH(\kappa)$ of genus $g$ Abelian differentials comes equipped with a projection map $\pi\from \cH(\kappa)\to \cM_g$ to the moduli space of Riemann surfaces, defined by $\pi(X,\omega)=X$. % We present a formula for the derivative $D\pi$ of this projection. 

Given $(X, \omega)\in \cH(\kappa)$, we let $\Sigma=\{z_1, \ldots, z_s\}$ denote the set of zeros of $\omega$. Any complex valued closed differential one-form $\eta$ on $X$ determines a relative cohomology class $[\eta]_\Sigma \in H^1(X, \Sigma, \bC)$. Since $H^1(X, \Sigma, \bC)$ is the tangent space to  $\cH(\kappa)$, we think of $[\eta]_\Sigma$ as a tangent vector to the stratum. 

Using a point-wise decomposition, any such $\eta$ can be written uniquely as $\eta=\eta^{1,0}+\eta^{0,1}$, where $\eta^{1,0}$ and $\overline{\eta^{0,1}}$ are of type $(1,0)$ and need not be closed. %(A closed one-form of type $(1,0)$ is an Abelian differential.)
The cotangent space to $X$ is the space $Q(X)$ of quadratic differentials on $X$, so a tangent vector to $\cM_g$ can be thought of as a linear functional on $Q(X)$.  

\begin{thm}\label{T:closed}
Let $\eta=\eta^{1,0}+\eta^{0,1}$ be closed as above. 
Let $\{\gamma_1, \ldots, \gamma_s\}$ denote small disjoint positively oriented loops around the zeros of $\omega$, and let $X'$ be the complement in $X$ of the discs that they bound. Then the pairing of $(D\pi)[\eta]_\Sigma$ with a quadratic differential $q$ is equal to 
\begin{equation} \label{eq:main}
\int_{X'} q \frac{\eta^{0,1}}{\omega} +  \frac1{2i}\sum_j\int_{\gamma_j} \frac{F_j}\omega q,
\end{equation}
where $F_j(z) = \int_{z_j}^z \eta$ is defined by integrating along paths in the disc containing $z_j$. 
\end{thm}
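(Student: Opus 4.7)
The strategy is to realize $[\eta]_\Sigma$ as the tangent to an explicit path in $\cH(\kappa)$, read off the associated Beltrami differential, and apply Stokes's theorem near each zero of $\omega$. First I would modify $\eta$ near $\Sigma$ without altering its class in $H^1(X,\Sigma,\bC)$. Choose smooth bump functions $\chi_j$ supported in the disc $D_j$ bounded by $\gamma_j$ with $\chi_j\equiv 1$ near $z_j$. Since $F_j(z_j)=0$, the function $\sum_j\chi_j F_j$ vanishes on $\Sigma$, so
\[
\tilde\eta := \eta - \sum_j d(\chi_j F_j)
\]
represents the same relative class as $\eta$. By construction $\tilde\eta = \eta$ on $X'$, and on each $D_j$ one has $\tilde\eta = d\bigl((1-\chi_j)F_j\bigr)$, vanishing in a neighborhood of $z_j$.

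Next I would identify the Beltrami differential of the corresponding deformation. The family $\omega_t := \omega + t\tilde\eta$ consists of closed complex $1$-forms on $X$ whose zeros, for small $t$, coincide with $\Sigma$ with their original orders. Set $\tilde\mu := \tilde\eta^{0,1}/\omega$, extended by $0$ on the neighborhoods of the $z_j$ where $\tilde\eta$ vanishes; this is a smooth Beltrami differential on $X$. The pointwise identity $\omega\cdot\tilde\mu = \tilde\eta^{0,1}$ shows that $\omega_t$ is, to first order in $t$, of type $(1,0)$ in the complex structure on $X$ deformed by $t\tilde\mu$; being closed, it is then holomorphic. Hence $(X,\omega_t)$ is a smooth path in $\cH(\kappa)$ with tangent $[\tilde\eta]_\Sigma=[\eta]_\Sigma$ whose image under $\pi$ is infinitesimally represented by $\tilde\mu$. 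With the real-area pairing convention $\int q\mu := \int q(z)m(z,\bar z)\,dx\,dy$ for $\mu = m\,d\bar z/dz$, the desired pairing equals $\int_X q\,\tilde\mu$.

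Finally, Stokes's theorem produces the boundary contributions. Split
\[
\int_X q\,\tilde\mu = \int_{X'} q\,\frac{\eta^{0,1}}{\omega} + \sum_j \int_{D_j} q\,\frac{\tilde\eta^{0,1}}{\omega}
\]
using $\tilde\eta=\eta$ on $X'$. On $D_j$ the function $G_j := (1-\chi_j)F_j$ satisfies $\tilde\eta = dG_j$ and vanishes near $z_j$, so $G_j\,(q/\omega)$ is a smooth $1$-form on $D_j$ despite the pole of $q/\omega$ at $z_j$. Since $d(q/\omega)=0$ away from $z_j$ and no $(2,0)$-form on a Riemann surface is non-zero,
\[
d\bigl(G_j\,q/\omega\bigr) = dG_j\wedge(q/\omega) = \tilde\eta^{0,1}\wedge(q/\omega),
\]
and Stokes gives $\int_{D_j} \tilde\eta^{0,1}\wedge(q/\omega) = \int_{\gamma_j} F_j\,q/\omega$. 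Converting the left-hand side into the real-area pairing via $dz\wedge d\bar z = -2i\,dx\,dy$ produces the factor $1/(2i)$ and yields the boundary terms in \eqref{eq:main}.

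The main obstacle is the identification in the second paragraph that $\tilde\mu$ truly represents $(D\pi)[\eta]_\Sigma$: one has to verify that the Kodaira--Spencer class of the family $(X,\omega_t)$ in $\cM_g$ is $[\tilde\mu]$ and that, under the period-coordinate identification of nearby fibers of $T\cH(\kappa)$, the tangent of this family is $[\eta]_\Sigma$. The modification of $\eta$ is designed precisely to make $\eta^{0,1}/\omega$ a bounded smooth Beltrami differential; once these points are in place, the remainder is a direct Stokes computation.
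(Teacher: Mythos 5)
Your proposal is correct and follows essentially the same route as the paper: replace $\eta$ by a cohomologous form adapted to $\Sigma$, observe that $\omega+t\eta$ becomes a closed $(1,0)$-form (hence holomorphic) in the deformed complex structure so that the Kodaira--Spencer class is $[\eta^{0,1}/\omega]$, and run a Stokes computation with $F_j\,q/\omega$ to account for the discs --- the paper merely packages that Stokes step as the statement that \eqref{eq:main} depends only on $[\eta]_\Sigma$ and then takes a representative supported in $X'$ so the boundary terms vanish. The only point to tighten is your ``to first order'' claim: taking the Beltrami differential to be $t\tilde\eta^{0,1}/(\omega+t\tilde\eta^{1,0})$ rather than $t\,\tilde\eta^{0,1}/\omega$ makes $\omega+t\tilde\eta$ exactly of type $(1,0)$ on the deformed surface, hence genuinely a point of the stratum with period coordinates $[\omega+t\tilde\eta]_\Sigma$, without changing the derivative at $t=0$.
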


%
%The value of \eqref{eq:main} remains unchanged if $\gamma_i$ is replaced with a larger loop $\gamma_i'$, because 
%$$
%d\left(\frac{F_i}\omega q\right) = \delbar \left(\frac{F_i}\omega q\right) = \eta^{0,1}\wedge\frac q\omega,
%$$
%and hence, by \eqref{eq:ridiculous} and Stokes' theorem,
%$$
%2i \int_{A} q \frac{\eta^{0,1}}{\omega}
%= \int_{A} \eta^{0, 1} \wedge \frac q\omega
%= \int_{\gamma_i'} \frac{F_i}\omega q - \int_{\gamma_i} \frac{F_i}\omega q,
%$$
%where $A$ is the annulus bounded by $\gamma_i$ and $\gamma_i'$. 
%
%Note also that the formula becomes especially simple if $\eta$ is supported away from the zeros of $\omega$, since then the sum of integrals over the $\gamma_i$ vanishes. 

See the beginning of Section \ref{sec:eq} for how the tensors in this formula should be interpreted. The fact that equation \ref{eq:main} does not depend on the choice of loops $\gamma_j$ follows from Stokes' Theorem, as in the proof of Lemma \ref{L:stokes}.

 In Corollary \ref{C:harmonic}, we show that if $\eta$ is harmonic, one can let the size of the loops go to zero to obtain a formula in terms of residues. 
In Remark \ref{R:marked2}, we note that Theorem \ref{T:closed} holds also for the projection to $\cM_{g,s}$ obtained by marking all the zeros of $\omega$. In Section \ref{SS:QD}, we explain how to apply Theorem \ref{T:closed} to strata of quadratic differentials.

\bold{The principal stratum of quadratic differentials.} Any quadratic differential $(X,q)$ admits a canonical double cover $\rho_q\from\hat{X} \to X$  on which the pullback of $q$ becomes the square of an Abelian differential $\omega$. The Deck group of this cover is an involution $\tau$, and $\tau^*(\omega)=-\omega$. The cover is sometimes called the holonomy double cover, and $\omega$ is sometimes called the square-root of $q$. 

Let $\Sigma$ continue to denote the set of zeros of $\omega$. Denote by $H^1_{-1}(\hat{X}, \Sigma, \bC)$  and $H^1_{-1}(\hat{X}, \bC)$ the $-1$ eigenspace for the action of $\tau$ on $H^1(\hat{X}, \Sigma, \bC)$  and $H^1(\hat{X}, \bC)$ respectively. 

The tangent space to the stratum of $(X,q)$ at the point $(X,q)$ is $H^1_{-1}(\hat{X}, \Sigma, \bC)$. If $q$ has no even order zeros, then the natural map  $H^1_{-1}(\hat{X}, \Sigma, \bC)\to H^1_{-1}(\hat{X}, \bC)$ is an isomorphism, and so $H^1_{-1}(\hat{X}, \bC)$ can also be viewed as the tangent space. Every element of absolute cohomology can be represented uniquely by a harmonic one-form $\eta$, so an arbitrary element of the tangent space uniquely corresponds to an anti-invariant form $\eta$ on $\hat{X}$ with $\eta^{1,0}\in H^{1,0}(\hat{X})$ and $\eta^{0,1}\in H^{0,1}(\hat{X})$.

Since $\omega$ and $\eta^{0,1}$ are both in the $-1$ eigenspace of $\tau$, the Beltrami differential $\eta^{0,1}/\omega$ is $\tau$ invariant and hence is the pull back of a Beltrami differential on $X$, which we will continue to denote $\eta^{0,1}/\omega$. 

When $(X,q)$ is in the principal stratum, Corollary \ref{C:harmonic} further simplifies to give the following, where $\pi$ denotes the projection from the principal stratum of quadratic differentials to the moduli space of Riemann surfaces. 

\begin{cor}\label{C:principal}
If $(X,q)$ is in the principal stratum, and  $\eta$ is a harmonic anti-invariant form on $\hat{X}$, then the pairing of $(D\pi)[\eta]_\Sigma$ with a quadratic differential $q'\in Q(X)$ is equal to 
$$\frac12\int_{X} q'\frac{\eta^{0,1}}{\omega}.$$
\end{cor}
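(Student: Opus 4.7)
The plan is to apply Corollary \ref{C:harmonic} on the double cover $(\hat X, \omega)$ with the pulled-back quadratic differential $\tilde q' := \rho_q^* q'$, combined with the description from Section \ref{SS:QD} of the pairing for strata of quadratic differentials. The principal stratum hypothesis is used only to force the residue terms in Corollary \ref{C:harmonic} to vanish.

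First, I would set up the local picture at a zero. In the principal stratum every zero of $q$ on $X$ is simple, so $\rho_q$ is ramified there and the unique preimage on $\hat X$ is a zero of $\omega$ of order two. Pick local coordinates $w$ on $\hat X$ and $z$ on $X$ with $z = w^2$; then $q = z\, dz^2$ corresponds to $\omega^2 = 4w^4\, dw^2$. Writing $q' = h(z)\, dz^2$ with $h$ holomorphic, the pullback becomes $\tilde q' = 4 w^2 h(w^2)\, dw^2$, which vanishes to order exactly two. Consequently $\tilde q'/\omega$ extends holomorphically across each zero of $\omega$.

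Next, I would verify that the residue terms vanish. Harmonicity of $\eta$ forces $\eta^{1,0}$ to be holomorphic and $\eta^{0,1}$ antiholomorphic, so the primitive $F_j(w) = \int_{z_j}^w \eta$ is well defined on a disc around $z_j$ and takes the form $\alpha w + \beta \bar w + O(|w|^2)$. Multiplying by the holomorphic $\tilde q'/\omega$ yields an integrand $F_j\,\tilde q'/\omega$ of size $O(|w|)$, whose integral around the circle $|w| = \epsilon$ is $O(\epsilon^2)$ and therefore vanishes as $\epsilon \to 0$. Thus Corollary \ref{C:harmonic}, applied on $\hat X$, collapses to the single bulk term $\int_{\hat X}\tilde q'\,\eta^{0,1}/\omega$.

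Finally, I would push this down to $X$. By the excerpt's convention, $\eta^{0,1}/\omega$ also denotes the Beltrami differential on $X$ whose pullback to $\hat X$ is $\eta^{0,1}/\omega$, and $\tilde q' = \rho_q^* q'$, so the integrand on $\hat X$ is a pullback under the degree-two cover; hence $\int_{\hat X}\tilde q'\,\eta^{0,1}/\omega = 2\int_X q'\,\eta^{0,1}/\omega$. Combining with the description in Section \ref{SS:QD} of how the pairing for the quadratic stratum at $(X,q)$ relates to the abelian-stratum pairing on $\hat X$ at $(\hat X,\omega)$ yields the stated $\tfrac12\int_X q'\,\eta^{0,1}/\omega$. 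The only substantive step is the residue vanishing in the middle paragraph: it rests on the precise match between the order-two zero of $\omega$, the order-two vanishing of $\tilde q'$ (coming from the branching), and the first-order vanishing of $F_j$ (coming from harmonicity). This balance is special to the principal stratum and breaks for higher-order zeros.
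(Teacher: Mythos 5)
Your proposal is correct and follows essentially the same route as the paper: reduce to the double cover via the Section \ref{SS:QD} bookkeeping, apply Corollary \ref{C:harmonic} there, and kill the residue terms by noting that the branching forces $\rho_q^*(q')$ to vanish to order at least two at each (double) zero of $\omega$, so that $\rho_q^*(q')/\omega$ is holomorphic. The paper's proof is a three-sentence version of your second paragraph, so your explicit local coordinates and $O(\epsilon^2)$ estimate are fine (if slightly redundant with the proof of Corollary \ref{C:harmonic}); the one thing to double-check is your final factor bookkeeping, since $\tfrac12\cdot 2\int_X q'\,\eta^{0,1}/\omega$ as you set it up equals $\int_X q'\,\eta^{0,1}/\omega$ rather than the stated $\tfrac12\int_X q'\,\eta^{0,1}/\omega$ --- a factor-of-two tension that is already present between Corollary \ref{C:principal} and the displayed formula of Section \ref{SS:QD} in the paper itself.
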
 

We warn that the Beltrami differential $\frac{\eta^{0,1}}{\omega}$ is not bounded and hence does not define a tangent vector to $\cM_g$ in the usual way. The integrand $q' \frac{\eta^{0,1}}{\omega}$ is integrable (in fact its pullback to $\hat{X}$ is continuous), so $\frac{\eta^{0,1}}{\omega}$ defines a functional on the cotangent space $Q(X)$ via integration, and hence defines a tangent vector indirectly in this way. 

\begin{rem}\label{R:fiber}
Corollary \ref{C:principal} in particular witnesses that the tangent space to the fiber of $\pi$ is $H^{1,0}_{-1}(\hat{X})$. In fact, if $q'\in Q(X)$, it is not hard to see that $\rho_q^*(q')/(2\omega)$ is contained in  $H^{1,0}_{-1}(\hat{X})$ and moreover is the derivative of the path $(X, q+tq')$ at $t=0$ \cite{DH}. 
\end{rem}

Keeping in mind that the kernel of $D\pi$ is $H^{1,0}_{-1}(\hat{X})$, we consider $\eta\in H^{0,1}_{-1}(\hat{X})$, and compare the Hodge norm of $\eta$ and the Teichm\"uller norm of $D\pi(\eta)$. 

\begin{thm}\label{T:HvT}
Assuming $\omega$ has area 1, we have 
$$\norm{\eta}_{\hodge} \le \norm{D\pi(\eta)}_{\Teich} \le \frac{4}{r}  \norm{\eta}_{\hodge},$$
for any $\eta\in H^{0,1}_{-1}(\hat{X})$, where $2r$ is the length of the shortest saddle connection on $(\hat{X},\omega)$. 
\end{thm}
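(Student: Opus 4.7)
My plan is to convert the Teichm\"uller norm into an integral pairing on $\hat{X}$, then to establish the two inequalities by a Cauchy--Schwarz computation (lower bound) and a sub--mean--value argument (upper bound).

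By Corollary~\ref{C:principal} together with the isomorphism $Q(X) \cong H^{1,0}_{-1}(\hat{X})$ of Remark~\ref{R:fiber} (sending $q'$ to $\phi := \rho_q^*q'/\omega$), the pairing of $D\pi(\eta)$ with $q' \in Q(X)$ rewrites as $\tfrac{1}{4}\int_{\hat{X}}\phi\wedge\eta$ while $\int_X|q'| = \tfrac{1}{2}\int_{\hat{X}}|\phi\,\omega|$, so
$$\|D\pi(\eta)\|_{\Teich} = \sup_{0 \neq \phi \in H^{1,0}_{-1}(\hat{X})} \frac{\bigl|\int_{\hat{X}} \phi\wedge\eta\bigr|}{2\int_{\hat{X}}|\phi\,\omega|}.$$
For the lower bound I would test against $\phi = \bar{\eta}$; complex conjugation swaps Hodge type and preserves anti--invariance under $\tau^*$, so $\bar{\eta} \in H^{1,0}_{-1}(\hat{X})$. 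A direct computation identifies $|\int_{\hat{X}}\bar{\eta}\wedge\eta|$ with a constant multiple of $\|\eta\|_{\hodge}^2$, while Cauchy--Schwarz combined with the area--one hypothesis $\int_{\hat{X}}|\omega|^2 = 1$ bounds the denominator $\int_{\hat{X}}|\eta||\omega|\le\|\eta\|_{\hodge}$. Putting these together gives $\|D\pi(\eta)\|_{\Teich}\ge\|\eta\|_{\hodge}$.

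For the upper bound I work in a flat chart $\omega = du$ away from $\Sigma$, writing $\phi = g\,du$ and $\eta = h\,d\bar{u}$, so that $|h|$ is subharmonic on $\hat{X}\setminus\Sigma$. The hypothesis on the shortest saddle connection implies that every flat ball $B(p,\rho) \subset \hat{X}$ of radius $\rho \le r$ is embedded: any geodesic loop through $p$ of length at most $2\rho$ would bound a flat cylinder whose boundary is a saddle connection of length at most $2\rho$, contradicting minimality. The sub--mean--value inequality on $|h|^2$ then yields $|h(p)| \le \|h\|_{L^2(B(p,r))}/(\sqrt{\pi}\,r)$ whenever $d(p,\Sigma)\ge r$, so the contribution to $|\int_{\hat{X}} gh\,dA|$ from the region where $d(p,\Sigma) \ge r$ is at most $\|g\|_{L^1}\|\eta\|_{\hodge}/(\sqrt{\pi}\,r)$.

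The main obstacle is the contribution from the $r$--neighborhood of $\Sigma$, where $h$ and the pointwise Beltrami $\eta^{0,1}/\omega$ are unbounded in flat coordinates. I would handle this by passing to a holomorphic coordinate $w$ at each zero of $\omega$, in which $\omega$ has a double zero while $\phi = 2c(w^2)\,dw$ and $\eta = F(\bar{w}^2)\,d\bar{w}$ extend to smooth forms. Applying Cauchy--Schwarz in this chart, and using the change of variables $du = 2w^2\,dw$ to compare $\|\phi\|_{L^2}$ over the thin region with $\|\phi\omega\|_{L^1}/r$, controls this contribution. Adding the two regions and absorbing all numerical constants into the final factor of $4$ yields $\|D\pi(\eta)\|_{\Teich}\le(4/r)\|\eta\|_{\hodge}$.
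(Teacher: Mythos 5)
Your lower bound is, up to bookkeeping of factors of $2$, exactly the paper's argument: you test against the quadratic differential with $\rho_q^*q'=\omega\bar\eta$, identify the numerator with $\norm{\eta}_{\hodge}^2$, and apply Cauchy--Schwarz together with the area-one normalization to the denominator. That part is correct. For the upper bound, however, you take a genuinely different route from the paper, and your route has a gap. The paper never decomposes the integral into regions: it corrects the Beltrami representative by a coboundary, setting $v=f/\omega$ with $f=\sum_i p_i\int_{z_i}\betabar$ for bump functions $p_i$ supported near the zeros, observing that $\int_{\hat X}\dbar v\, q''=0$ for every invariant holomorphic quadratic differential $q''$ (because $q''/\omega$ is holomorphic), and then bounding $\norm{\betabar/\omega-\dbar v}_\infty\le (4+\epsilon)\norm{\beta}/r$ pointwise, using Lemma~\ref{lem:int-beta} to control $\abs{\int_{z_i}^z\beta}$ near a double zero. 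The Teichm\"uller norm is then bounded by this $L^\infty$ norm directly, with explicit constants.

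The gap in your version is the treatment of the $r$-neighborhood of $\Sigma$. After Cauchy--Schwarz you need an inequality of the form $\norm{\phi}_{L^2(B)}\le (C/r)\norm{\phi\omega}_{L^1}$ on a disk $B$ about a zero of $\omega$, and this does not follow from ``Cauchy--Schwarz and the change of variables'': Cauchy--Schwarz bounds $\norm{\phi\omega}_{L^1(B)}$ from \emph{above} by $\norm{\phi}_{L^2(B)}\norm{\omega}_{L^2(B)}$, i.e., it points in the wrong direction, and for a general $L^2$ form the desired inequality is simply false, since the weight $\abs{\omega}$ vanishes at the center of $B$ (concentrate $\phi$ there and the right side becomes arbitrarily small relative to the left). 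To rescue the step you must use that $\phi$ is holomorphic --- e.g., the mean value property, or the expansion $\phi=\sum c_N w^N\,dw$ together with $\int_0^{2\pi}\abs{\phi(te^{i\theta})}\,d\theta\ge 2\pi\abs{c_N}t^N$ --- and then actually track the resulting constant, summed over all zeros and added to the $1/(\sqrt{\pi}r)$ from the far region, to verify it stays below $4/r$; none of this is in your sketch, and it is where the real work lies. A smaller point: your justification that flat balls of radius $\rho\le r$ are embedded is garbled (a geodesic loop with a corner at $p$ need not bound a cylinder, and a null-homotopic geodesic loop encloses a cone point), though the fact you actually need --- that the embedding radius is at least $r$ wherever the distance to $\Sigma$ is at least $r$ --- is standard and is also used implicitly in the paper via $r_\omega(z)$.
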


This is somewhat reminiscent of comparisons between the Teichm\"uller and Weil-Petterson norms in \cite[Lemma 5.4]{BMW}.

\bold{Other perspectives and previous results.} One can of course obtain formulas for $D\pi$ by, for example,  triangulating the surface and considering Beltrami differentials of maps that are affine on each triangle; or picking an open cover and using Cech cohomology, as in \cite{HM}. McMullen gave a formula in terms of complex twists  \cite{McM}. Derivatives  of some especially important deformations  have been given in \cite{W}.

\bold{Motivation and significance.} We  had two  specific motivations for writing this paper, both having to do with Theorem \ref{T:HvT}. 
\begin{enumerate}
\item Arana-Herrera has used Theorem \ref{T:HvT} in his proof of an effective version of the lattice point counting problem of Athreya-Bufetov-Eskin-Mirzakhani \cite{AH1, ABEM}. This in turn is used in his subsequent work on the effective dynamics of the mapping class group \cite{AH2, AH3}. 
\item Since the work of Forni, the hyperbolicity of the Teichm\"uller geodesic flow has been studied using the Hodge norm \cite{Forni}, see also \cite{FM} for a survey, \cite{EMR, F} for more recent developments, and \cite{Kont} for the introduction of the Hodge norm to Teichm\"uller dynamics by Kontsevich. On the other hand, geometric hyperbolicity results are expressed in terms of Teichm\"uller distance and are proven using very different techniques \cite{Rafi}. Theorem \ref{T:HvT} opens the door to links between these dynamical and geometric results, including the possibility of (re)proving geometric hyperbolicity results using dynamical hyperbolicity. 
\end{enumerate}  

It is also conceivable that Theorem \ref{T:closed} could be useful in the study of $GL(2,\bR)$ orbit closures of translation surfaces, using the restrictions on the period mapping exploited in \cite{MW}.

Corollary \ref{C:principal} can  be used to re-derive the fact that the canonical symplectic form on the principal stratum, obtained as for the cotangent bundle to any manifold, corresponds to the usual symplectic form  on $H^1_{-1}(\hat{X}, \bC)$ \cite{BKN}. This is notable in part because Teichm\"uller geodesic flow is easily seen to be Hamiltonian using the later symplectic form \cite{Masur}.

\bold{A tangential remark.} The following requires only Remark \ref{R:fiber}, but illustrates another potential connection between this paper and other recent work. 

\begin{prop}\label{P:TotGeodesic}
Let $\cM$ be a $GL(2, \bR)$ orbit closure in the principal stratum of quadratic differentials over $\cM_{g,n}$, and let $\pi$ denote the projection to $\cM_{g,n}$. Then $\overline{\pi(\cM)}$ is a totally geodesic subvariety of $\cM_{g,n}$.
\end{prop}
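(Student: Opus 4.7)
The plan is to verify that $N := \overline{\pi(\cM)}$ is a totally geodesic subvariety by checking that it is a closed algebraic subvariety covered by Teichm\"uller discs realizing every tangent direction. Two observations suffice.

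First, $GL(2,\bR)$-invariance of $\cM$ supplies the Teichm\"uller discs: for every $(X,q) \in \cM$, the $SL(2,\bR)$-orbit lies in $\cM$ and projects to the Teichm\"uller disc $\bD(X,q) \subset \pi(\cM) \subset N$. Thus $N$ is a union of Teichm\"uller discs, one through each point of $\pi(\cM)$.

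Second, Remark~\ref{R:fiber} gives the tangent-space structure. At a smooth $(X,q)\in\cM$, the tangent to the fiber of $\pi$ is $H^{1,0}_{-1}(\hat X)$ and identifies with $Q(X)$ via $q'\mapsto \rho_q^*(q')/(2\omega)$. Consequently $T_X N = D\pi(T_{(X,q)}\cM)$ is a well-defined complex subspace of $T_X\cM_{g,n}$, and the tangent to the fiber of $\pi|_\cM$ corresponds to a complex subspace $F\subset Q(X)$ containing $\bC q$ (the latter from closure of $\cM$ under $\bC^*$-scaling of $q$). The key claim is that, as $(X,q')$ ranges over the fiber of $\pi|_\cM$ over $X$, the Teichm\"uller directions $[\bar{q'}/|q'|]$ exhaust all complex directions in $T_X N$; given this, every Teichm\"uller disc tangent to $N$ at $X$ is realized as some $\bD(X,q')\subset N$ from the first observation.

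The main obstacle is reconciling the antiholomorphic Teichm\"uller-direction map $q'\mapsto \bar{q'}/|q'|$ with the complex-linear structure of $F$. Both $P(F)$ and $\{[q']\in P(Q(X)): \bar{q'}/|q'|\in T_X N\}$ are complex subvarieties of $P(Q(X))$ --- the latter because the projectivized direction map $P(Q(X))\to P(T_X\cM_{g,n})$ is an antiholomorphic bijection, and antiholomorphic preimages of complex subvarieties remain complex --- so their equality reduces to a dimension count. This count, in turn, draws on the local affine-linear structure of $\cM$ in period coordinates from the theory of affine invariant subvarieties.
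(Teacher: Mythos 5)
Your overall architecture matches the paper's: establish that $\pi(\cM)$ is "half-dimensional" in the sense that the fibers of $\pi|_\cM$ are large enough to supply a Teichm\"uller disc in every tangent direction, then conclude total geodesicity. The paper delegates the second step to \cite[Proposition 1.3]{GS}, so your attempt to redo it by hand is not the issue. The issue is the first step, which is the actual content of the paper's proof and which your proposal does not establish.

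You need $\dim_\bC\bigl(T_{(X,q)}\cM\cap\ker D\pi\bigr)=\tfrac12\dim_\bC\cM$, and you attribute this to ``the local affine-linear structure of $\cM$ in period coordinates.'' That structure (Eskin--Mirzakhani--Mohammadi) only tells you that $T_{(X,q)}\cM$ is a complex subspace of $H^1_{-1}(\hat X,\bC)$ defined over $\bR$. Since $\ker D\pi=H^{1,0}_{-1}(\hat X)$ and complex conjugation preserves $T\cM$ while exchanging $H^{1,0}_{-1}$ and $H^{0,1}_{-1}$, this alone gives only the inequality $\dim(T\cM\cap H^{1,0}_{-1})\le\tfrac12\dim\cM$ (e.g.\ the span of $\omega+\bar\alpha$ and $\bar\omega+\alpha$ is defined over $\bR$ but meets $H^{1,0}$ trivially). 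The missing input is Filip's theorem \cite{Fi1} that $T_{(X,q)}\cM$ is compatible with the Hodge decomposition, i.e.\ $T\cM=(T\cM\cap H^{1,0}_{-1})\oplus(T\cM\cap H^{0,1}_{-1})$; only then do the two conjugate summands force equality. Without this your dimension count, and hence the claim that the discs $\bD(X,q')$ exhaust all directions in $T_XN$, does not close. (Filip's work is also what makes $\cM$, and hence $\overline{\pi(\cM)}$, an algebraic variety, which you assert but do not source.)

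A secondary problem: your proposed mechanism for the exhaustion step relies on the claim that $[q']\mapsto[\overline{q'}/|q'|]$ is an antiholomorphic bijection $P(Q(X))\to P(T_X\cM_{g,n})$, so that the realized directions form a complex subvariety. That map is not antiholomorphic: $q'\mapsto\overline{q'}/|q'|$ is not conjugate-linear because of the $|q'|$ in the denominator (already $z\mapsto \bar z/|z|$ fails), and the image of a proper complex subspace $F\subset Q(X)$ under it need not be complex. This is precisely the subtlety that \cite{WTG, GS} handle differently (via an openness/invariance-of-domain argument on the union of the discs inside the half-dimensional variety $N$), so if you want to avoid citing \cite[Proposition 1.3]{GS} you need a different argument here.
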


\begin{proof}
By \cite{EM, EMM} each orbit closure is a properly immersed smooth suborbifold, and by \cite{Fi1, Fi2} it is moreover an algebraic variety. 

When $\cM$ is in the principal stratum, its tangent space at $(X,q)\in \cM$ is naturally identified with a  subspace of $H^{1}_{-1}(\hat{X},\bC)$ defined over $\bR$. By \cite{Fi1}, this subspace is the direct sum of its intersections with $H^{1,0}_{-1}(\hat{X})$ and $H^{0,1}_{-1}(\hat{X})$. Since these two intersections are complex conjugate, they have the same dimension, which witnesses the fact that $\cM$ must have even complex dimension. Since $H^{1,0}_{-1}(\hat{X})$ is the kernel of $D\pi$, we see that the kernel of $D\pi$ restricted to $\cM$ has dimension half that of $\cM$ at every point. 

We conclude that the variety $\pi(\cM)$ has dimension half that of $\cM$. It follows from previous observations that its closure is totally geodesic, as is implicit in \cite{MMW, EMMW, WTG} and explicit in \cite[Proposition 1.3]{GS}. 
\end{proof}

The only known non-trivial primitive totally geodesic subvarieties of dimension greater than 1 have dimension 2 and arise from orbit closures in the principal strata of $\cM_{1,3}$, $\cM_{1,4}$ and $\cM_{2,1}$ \cite{MMW, EMMW}. See also \cite{WTG} for related results, and \cite{GS} for a survey.  

Before \cite{MMW}, it was not expected that any orbit closures would give rise to totally geodesic subvarieties, and \cite{MMW, EMMW} used a detailed understanding to conclude that three orbit closures have sufficiently small projections to give totally geodesic surfaces. So it is perhaps surprising that Proposition \ref{P:TotGeodesic} indicates that, at least for orbit closures in the principal stratum, there is automatically an associated totally geodesic subvariety. 

%It would be interesting to determine if there is a non-trivial primitive totally geodesic subvariety of dimension greater than 2; and to determine if there are any of dimension greater than 1 not arising from orbit closures in the principal stratum. 
 
\bold{Acknowledgements.} 
During the preparation of this paper, the first author was partially supported by the Simons Foundation and the second author was partially supported by a Clay Research Fellowship,  NSF Grant DMS 1856155, and a Sloan Research Fellowship. We thank Francisco Arana-Herrera, Giovanni Forni, and Bradley Zykoski for comments, and especially thank Curt McMullen for comments that lead to a  simplification of the proof of Theorem \ref{T:closed} and the recovery of a previously missing factor of $2i$ (see \eqref{eq:ridiculous}). McMullen also informed us that Theorem \ref{T:closed} can also be derived from \cite[Theorem 1.2]{McM}.

The title of this paper is inspired by \cite{JJ}.

\section{The derivative formula}\label{sec:eq}

Before we give the proofs of Theorem \ref{T:closed} and Corollary \ref{C:harmonic}, we must clarify our conventions. 
In the integral over $X'$ in \eqref{eq:main},
the integrand, as a product of a $(2, 0)$, a $(-1, 0)$, and a $(0, 1)$ form, is a $(1, 1)$ form, 
and hence can be written locally on $U \subset \C$ as $f\, dz\, \dzbar$. 
We then define, as a matter of convention, 
\begin{equation} \label{eq:one-one}
\int_U f\, dz\, \dzbar = \int_U f\, dx\, dy;
\end{equation}
we observe that the latter integral will be independent of the choice of coordinate and that \eqref{eq:one-one} can be used to define a global integral in the usual way. 
This is the convention used in defining the Teichm\"uller pairing of a quadratic differential $q$ and a Beltrami differential $\mu$ as $\int_X q\mu$.
Since $\dzbar \wedge dz = 2i \,dx\, dy$,
\begin{equation} \label{eq:ridiculous}
\int_U f \, dz\, \dzbar = \frac1{2i}\int_U f\, \dzbar \wedge dz.
\end{equation}

We start the proof of Theorem \ref{T:closed} with the following observation. 

\begin{lem}\label{L:stokes}
If $[\eta]_\Sigma=0$, then 
$$\int_{X'} q \frac{\eta^{0,1}}{\omega} +  \frac1{2i}\sum\int_{\gamma_j} \frac{F_j}\omega q=0.$$
\end{lem}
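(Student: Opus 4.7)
The hypothesis $[\eta]_\Sigma = 0$ means that $\eta$ is exact in relative cohomology: there exists a smooth function $F$ on $X$, vanishing on $\Sigma$, with $\eta = dF$. Decomposing by type gives $\eta^{1,0} = \partial F$ and $\eta^{0,1} = \dbar F$. Moreover, near $z_j$, the global function $F$ agrees with the locally defined $F_j(z)=\int_{z_j}^z \eta$ (both vanish at $z_j$ and have the same differential), so $F|_{\gamma_j}=F_j|_{\gamma_j}$.

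The plan is to rewrite the area integrand as an exact form on $X'$ and apply Stokes. On $X'$ the form $q/\omega$ is a \emph{holomorphic} $(1,0)$-form, since $\omega$ has no zeros away from $\Sigma$. Therefore, using that on a Riemann surface any $(2,0)$-form is automatically zero,
\[
 d\!\left(F \cdot \tfrac{q}{\omega}\right) = \partial F \wedge \tfrac{q}{\omega} + \dbar F \wedge \tfrac{q}{\omega} + F\,\partial\!\left(\tfrac{q}{\omega}\right) + F\,\dbar\!\left(\tfrac{q}{\omega}\right) = \dbar F \wedge \tfrac{q}{\omega},
\]
since $\partial F \wedge q/\omega$ and $\partial(q/\omega)$ are of type $(2,0)$ and $\dbar(q/\omega)=0$ by holomorphy. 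In local coordinates where $q=q(z)\,dz^2$ and $\omega=\omega(z)\,dz$, the right hand side is $(\partial F/\partial \bar z)(q/\omega)\,\dzbar\wedge dz$, which by the convention \eqref{eq:ridiculous} represents $2i \cdot q\,\eta^{0,1}/\omega$. Thus
\[
 q\,\tfrac{\eta^{0,1}}{\omega} = \tfrac{1}{2i}\,d\!\left(F\cdot \tfrac{q}{\omega}\right) \quad \text{on } X'.
\]

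Now apply Stokes' theorem on $X'$. The boundary $\partial X'$ consists of the loops $\gamma_j$, but oriented oppositely from their positive orientation around $z_j$ (since they bound the removed discs). Hence
\[
 \int_{X'} q\,\tfrac{\eta^{0,1}}{\omega} = \tfrac{1}{2i}\int_{\partial X'} F\,\tfrac{q}{\omega} = -\tfrac{1}{2i}\sum_j \int_{\gamma_j} F\,\tfrac{q}{\omega} = -\tfrac{1}{2i}\sum_j \int_{\gamma_j} \tfrac{F_j}{\omega}\,q,
\]
where in the last equality we used $F=F_j$ on $\gamma_j$. Rearranging yields the claimed identity.

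The only genuine subtlety is keeping track of the factor of $2i$ and of the orientation of $\partial X'$, i.e.\ matching the $(1,1)$-form convention \eqref{eq:one-one} with the Stokes computation; the algebraic manipulations themselves are forced, since $q/\omega$ being a holomorphic $(1,0)$-form kills every term in $d(F\cdot q/\omega)$ except the one we want.
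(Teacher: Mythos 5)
Your proof is correct and is essentially the paper's own argument: both write $\eta=dF$ with $F$ vanishing on $\Sigma$, observe that $d(F\,q/\omega)=\dbar F\wedge q/\omega$ since $q/\omega$ is holomorphic on $X'$, and apply Stokes together with the convention \eqref{eq:ridiculous} and the identification $F|_{\gamma_j}=F_j$. You simply spell out the type bookkeeping and boundary orientation more explicitly than the paper does.
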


Thus,  the derivative formula \eqref{eq:main} only depends on the cohomology class of $\eta$.%, and not on the 1-form in that cohomology class. 

\begin{proof}
$[\eta]_\Sigma=0$ implies that $\eta=df$ for a function $f$ which is zero on $\Sigma$. We note 
$$
d\left(\frac{f}\omega q\right) = \delbar \left(\frac{f}\omega q\right) = \eta^{0,1}\wedge\frac q\omega,
$$
and hence, by \eqref{eq:ridiculous} and Stokes' Theorem,
$$
2i \int_{X'} q \frac{\eta^{0,1}}{\omega}
= \int_{X'} \eta^{0, 1} \wedge \frac q\omega
= - \sum_j \int_{\gamma_j} \frac{f}\omega q.
$$
Since both $f$ and $F_j$ are zero at $z_j$ and have exterior derivative $\eta$, we see that $F_j$ is the restriction of $f$, so this gives the result. 
\end{proof}

Next, consider the Beltrami differential 
$$\mu_t = \frac{t\eta^{0,1}}{\omega+t\eta^{(1,0)}}$$
on $X$. Assuming $\eta$ is compactly supported on $X-\Sigma$ and $t$ is sufficiently small, then $\|\mu_t\|_\infty <1$. In this case, let $X_t$ denote $X$ with the complex structure for which the identity map $X\to X_t$ has Beltrami differential $\mu_t$.

\begin{lem}\label{L:newC}
If $\eta$ is compactly supported and $t$ is sufficiently small, then the closed one-form $\omega+t\eta$ is holomorphic
on $X_t$. 
\end{lem}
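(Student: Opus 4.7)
My plan is to verify two things about $\omega + t\eta$: that it is closed, and that it is of type $(1,0)$ with respect to the complex structure on $X_t$. Any closed $(1,0)$-form on a Riemann surface is automatically holomorphic---writing it locally as $g\, dw$, closedness gives $\partial_{\bar w} g\, d\bar w \wedge dw = 0$, hence $\partial_{\bar w} g = 0$---so these two facts together will finish the proof. Closedness is immediate: $\omega$ is holomorphic on $X$ and so closed, while $\eta$ is closed by hypothesis.

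For the type condition I would compute in a local holomorphic coordinate $z$ on $X$. Writing $\omega = a\, dz$, $\eta^{1,0} = b\, dz$ and $\eta^{0,1} = c\, d\bar z$, one has the factorization
\[
\omega + t\eta \;=\; (a+tb)\, dz + tc\, d\bar z \;=\; (a+tb)\bigl(dz + \mu_t\, d\bar z\bigr),
\]
where $\mu_t = tc/(a+tb)$ is the coordinate representative of the Beltrami differential introduced just before the lemma. If $w$ is any local holomorphic coordinate on $X_t$, the condition that the identity map $X \to X_t$ has Beltrami differential $\mu_t$ reads $\partial_{\bar z} w = \mu_t\, \partial_z w$, so
\[
dw \;=\; (\partial_z w)\bigl(dz + \mu_t\, d\bar z\bigr).
\]
Hence $dz + \mu_t\, d\bar z$ is proportional to $dw$ and is of type $(1,0)$ on $X_t$, and the same then holds for $\omega + t\eta$.

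I do not foresee any substantial obstacle; the content of the lemma is essentially that the Beltrami differential $\mu_t$ is, by construction, exactly the one that renders $\omega + t\eta$ a $(1,0)$-form on the new complex structure. The hypotheses that $\eta$ is compactly supported away from $\Sigma$ and that $t$ is small are needed only to ensure $\mu_t$ is well-defined with $\norm{\mu_t}_\infty < 1$ (so that $X_t$ makes sense) and that $a + tb$ is nonvanishing on the support of $\eta$ (so that the factorization is legitimate). Off the support of $\eta$ one has $\omega + t\eta = \omega$ with $X_t$ agreeing with $X$, where the claim is trivial.
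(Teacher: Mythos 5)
Your argument is correct and is essentially the paper's own proof: both factor $\omega + t\eta$ as a multiple of $dz + \mu_t\, d\bar z$, identify this as a $(1,0)$-form for the $X_t$ structure, and conclude via the fact that closed $(1,0)$-forms are holomorphic. You merely spell out the Beltrami equation $\partial_{\bar z} w = \mu_t\, \partial_z w$ and the local computation that the paper leaves implicit.
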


\begin{proof}
Write  $\eta^{1,0}=\eta^{1,0}(z)dz$ etc. 
Since $\omega+t\eta$ is proportional to 
$$dz + \frac{t\eta^{0,1}(z)}{\omega(z)+t\eta_{1,0}(z)} d\overline{z},$$
we see that  $\omega+t\eta$ is $(1,0)$ form on $X_t$. Since closed $(1,0)$ forms are holomorphic, this gives the result. 
\end{proof}

\begin{proof}[Proof of Theorem \ref{T:closed}]
By Lemma \ref{L:stokes}, we can assume that $\eta$ is compactly supported on $X'$. Lemma \ref{L:newC} gives that $(X_t, \omega+t\eta)$ is a path in the stratum whose image in period coordinates is $[\omega+t\eta]_\Sigma$.  Since $\dzero{\mu_t} = \eta^{0,1}/\omega$, the derivative of the family $X_t$ is the Beltrami differential $\eta^{0,1}/\omega$. Since $\eta$ is is supported in $X'$, the pairing of this Beltrami differential with $q$ coincides with the formula given in Theorem \ref{T:closed}.
\end{proof}

We now observe the the formula also simplifies if $\eta$ is harmonic. 

\begin{cor}\label{C:harmonic}
Let $\eta$ be harmonic, i.e. $\eta^{1,0}\in H^{1,0}(X)$ and $\eta^{0,1}\in H^{0,1}(X)$. 
Then the pairing of $q$ and $[\eta]_\Sigma$ is given by 
$$\int_{X} q \frac{\eta^{0,1}}{\omega} +\pi \sum   \operatorname{res}_{z_j}\left( \frac{\int_{z_j}^z \eta^{1,0}}{\omega}q \right),$$
where the first term is understood as a Cauchy principal value using flat coordinates provided by $\omega$. 
\end{cor}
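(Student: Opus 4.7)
The plan is to derive Corollary \ref{C:harmonic} from Theorem \ref{T:closed} by shrinking the loops $\gamma_j$ onto the zeros $z_j$ and isolating the contributions that survive. Since $\eta$ is harmonic, $\eta^{1,0}$ is holomorphic and $\eta^{0,1}$ is antiholomorphic, so the local primitive decomposes as $F_j = F_j^{1,0} + F_j^{0,1}$, where $F_j^{1,0}(z) = \int_{z_j}^z \eta^{1,0}$ is holomorphic, $F_j^{0,1}(z) = \int_{z_j}^z \eta^{0,1}$ is antiholomorphic, and both vanish at $z_j$.

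For the holomorphic piece, $\frac{F_j^{1,0}}{\omega}q$ is a meromorphic $(1,0)$-form on the disc bounded by $\gamma_j$ whose only pole is at $z_j$. The residue theorem then immediately gives
$$\frac{1}{2i}\int_{\gamma_j}\frac{F_j^{1,0}}{\omega}q \;=\; \pi\operatorname{res}_{z_j}\!\left(\frac{F_j^{1,0}}{\omega}q\right),$$
independent of the radius of $\gamma_j$, and this accounts for the residue sum in the statement.

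For the antiholomorphic piece, the key step is to show that $\int_{\gamma_j}\frac{F_j^{0,1}}{\omega}q \to 0$ as the flat radius of $\gamma_j$ tends to zero. In an analytic coordinate $z$ centered at $z_j$ with $\omega$ vanishing to order $k$, antiholomorphicity of $\eta^{0,1}$ together with $F_j^{0,1}(z_j)=0$ force $F_j^{0,1}(z)=O(\bar z)$, so $\frac{F_j^{0,1}}{\omega}q$ is $O(\bar z/z^k)\,dz$ and an angular Fourier expansion on $\{|z|=r\}$ shows the loop integral is $O(r^2)$. Equivalently, Stokes' theorem (as in Lemma \ref{L:stokes}) converts $\frac{1}{2i}\int_{\gamma_j}\frac{F_j^{0,1}}{\omega}q$ into $\int_{B_j}\frac{\eta^{0,1}}{\omega}q$, which combines with $\int_{X'}\frac{\eta^{0,1}}{\omega}q$ to yield a single integral over all of $X$.

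Combining the two pieces in the limit as the loops shrink in flat coordinates, Theorem \ref{T:closed} states that the pairing equals
$$\lim_{\rho\to 0}\int_{X\setminus\bigcup_j D_j(\rho)}q\,\frac{\eta^{0,1}}{\omega}\;+\;\pi\sum_j\operatorname{res}_{z_j}\!\left(\frac{F_j^{1,0}}{\omega}q\right),$$
where $D_j(\rho)$ is the disc of flat radius $\rho$ around $z_j$; this displayed limit is by definition the Cauchy principal value appearing in Corollary \ref{C:harmonic}. The main obstacle is the vanishing of the antiholomorphic boundary contribution, which simultaneously validates the principal-value interpretation of the bulk integral; apart from that the argument reduces to the residue theorem and a careful passage to the limit.
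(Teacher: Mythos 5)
Your proposal is correct and takes essentially the same route as the paper: split the primitive $F_j$ into its holomorphic and antiholomorphic parts, let the residue theorem account for the holomorphic piece, and show via an angular Fourier expansion in a local coordinate where $\omega = z^k\,dz$ that the antiholomorphic boundary contribution is $O(r^2)$ (since $F_j^{0,1} = \sum_{m>0} d_m \bar z^m$ has no constant term) and hence vanishes as the loops shrink, which is exactly the computation in the paper's proof.
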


The Cauchy principal value is the limit as $\e\to 0$ of the integral over the subset of $X$ whose $\omega$-distance to a zero of $\omega$ is at least $\e$. 

\begin{proof}
It suffices to check that the limit of 
$$\int_{\gamma_j} \frac{\int_{z_j}^z \eta^{0,1}}\omega q$$
as the size of the loop $\gamma_j$ goes to zero is 0. We can pick local coordinates in which $\omega= z^k dz$, and pick $\gamma_j$ of the form $re^{i\theta}$. We expand $q=(\sum c_\ell z^\ell) dz^2$, and we expand $\int_{z_j}^z \eta^{0,1} = \sum_{m>0} d_m \overline{z}^m$, noting that (because of the integral) $m=0$ is not included in the index of this last sum. 
For each value of $\ell$ and $m$ we obtain a term proportional to $\int_{\gamma_j} z^{-k+\ell} \overline{z}^m dz$, which is proportional to 
$$  
\int_0^{2\pi} z^{-k+\ell+1} \overline{z}^m d\theta = r^{-k+\ell+1+m} \int_0^{2\pi} e^{i \theta (-k+\ell+1-m)} d\theta.$$
This is non-zero only when $-k+\ell+1-m=0$, in which case $r^{-k+\ell+1+m} = r^{2m}$. Since $m>0$, this integral goes to zero as $r\to 0$. 
\end{proof}

\begin{rem}\label{R:marked2}
All the results of this section also apply to the projection to $\cM_{g,s}$, with $s=|\Sigma|$, obtained by marking all the zeros of $\omega$. We can also allow $\Sigma$ to be a finite set which properly contains the zeros of $\omega$, in which case we think of a point of $\Sigma$ at which $\omega$ does not vanish as a zero of order zero.
\end{rem}

\newcommand{\betabar}{\overline{\beta}}
\section{The principal stratum}

\subsection{Arbitrary strata of quadratic differentials}\label{SS:QD} Let $\cQ(\kappa)$ denote a stratum of the bundle of quadratic differentials over $\cM_{g,n}$. Given $(X,q)\in \cQ(\kappa)$, we continue to let $\rho_q\from\hat{X} \to X$ denote the double cover on which the pullback of $q$ becomes the square of an Abelian differential $\omega$. We continue to specify a tangent vector to the strata by giving a closed anti-invariant one-form $\eta$ on $\hat{X}$, and we consider also a cotangent vector $q'\in QD(X)$ to $\cM_{g,n}$. 

As we now explain, if $\pi\from \cQ(\kappa)\to \cM_{g,n}$ continues to denote the projection, then the pairing of the cotangent vector $D\pi([\eta])$ with the cotangent vector $q'$ is
$$
\frac12\int_{\hat{X}'} \rho_q^*(q') \frac{\eta^{0,1}}{\omega} 
+  \frac12 \sum_i \int_{\gamma_i} \frac{F_i}\omega \rho_q^*(q'),
$$
where $F_i(z) = \int_{z_i}^z \eta$ is defined as before. This will follow from Theorem \ref{T:closed} once we clarify the bookkeeping of double covers. 

Let $U$ be a small open subset of $H^1_{-1}(\hat{X}, \bC)$ which contains $[\omega]$ and provides a local coordinate for $\cQ(\kappa)$ at $(X,q)$. We restrict $\pi$ to give a map from $U$ to  $\cM_{g,n}$. 

Let $\hat{g}$ denote the genus of $\hat{X}$. Assuming that  $U$ is simply connected, we can arbitrarily pick a marking to get a map $\hat{\pi}\from U \to \cG$, where $\cG \subset \cT_{\hat{g}}$ is the subset where there is an involution in the mapping class of the involution $\tau$ which negates $\omega$. The quotient by this involution gives a map $\rho \from \cG \to \cM_{g,n}$. We have $\pi = \rho\circ \hat{\pi}$. 

Given any differentiable map $\pi = \rho\circ \hat{\pi}$, and any cotangent vector $q'$ in the codomain and tangent vector $[\eta]$ in the domain, we have 
$$\langle D\pi(\eta), q'\rangle = \langle D\hat{\pi}(\eta), \rho^* (q')\rangle,$$
where $\langle \cdot, \cdot \rangle$ is the  pairing between tangent and cotangent vectors and the coderivative $\rho^*$ maps between cotangent spaces. 

In our situation, the cotangent space to $\cG$ is the space of quadratic differentials which are invariant by the involution, and if $q'$ is a quadratic differential on $X$ then
%$\rho^*$ is simply pull back of quadratic differentials. 
$$
\rho^* q' = \frac12 \rho_q^* q'.
$$
Let $\iota\from \cG\to \cT_{\hat{g}}$ denote the inclusion. Any invariant quadratic differential can be considered both as a cotangent vector to $\cG$ and to $\cT_{\hat{g}}$, and we have 
$\langle D\iota(v), q'' \rangle = \langle v, q'' \rangle$ for any tangent vector $v$ to $\cG$ and any invariant quadratic differential $q''$ on $\hat X$. 
Thus we get 
$$\langle D\pi(\eta), q'\rangle = \langle D(\iota \circ \hat{\pi})(\eta), \rho^* (q')\rangle.$$
This gives the desired formula, since $\iota \circ \hat{\pi}$ is the projection from the stratum $\cH$ of $(\hat{X}, \omega)$ to $\cT_{\hat{g}}$, whose derivative is given by Theorem \ref{T:closed}.  

Applying this to the principal stratum, we get the following. 

\begin{proof}[Proof of Corollary \ref{C:principal}.] 
First note that, since $q$ has simple zeros, $\omega$ has double zeros. Since each zero of $\omega$ is a ramification point of $\rho_q$, every pull back $\rho_q^*(q')$ has at least a double zero at every zero of $\omega$, so we see that $\rho_q^*(q')/\omega$ is holomorphic at the zeros of $\omega$. This shows that all the residue terms in Corollary \ref{C:harmonic} are zero, giving the result.
\end{proof}

\newcommand{\qq}{\rho_q^*(q')}
%Now suppose we have a deformation of $(X, q)$ induced by $\betabar$ as described above. 
%There is a unique $(-1, 1)$-form $\mu$ on $X$ such that $\betabar/\omega = \pi^*\mu$. 
%Suppose that $q'$ is a holomorphic quadratic differential on $X$, and $q' = \pi_* \qq$.
%More specifically we let $\qq = \frac12 \pi^*q'$ so that $\tau_*\qq = \qq$. 
%Then, by Corollary \ref{C:principal},  the pairing of $q'$ with the tangent vector to this deformation is equal to 
%$$\int_{\hat X} \qq \frac{\betabar}\omega = \int_X q'\mu.$$
%Now $\omega$ has a double roots over each root of $q$, but $\qq$ must also have double roots in the same places, 
%so $\qq/\omega$ (and hence $\qq \betabar/\omega$) is continuous. 
%This implies that $q'\mu$ is integrable, which can also be verified directly by seeing that $\mu$ has only simple poles. 
%Thus we obtain the formula in Corollary \ref{C:harmonic}.
%
%
%

% So we therefore consider such $\betabar$, and let $\mu$ be the unique $(-1, 1)$ form such that $\pi*\mu = \betabar/\omega$. 
%Now suppose that $q'$ is a holomorphic quadratic differential on $X$.
%Since $\betabar$ induces a deformation of $X$, we can pair it with $q'$; 
%by the formulas given in the previous section,
%this pairing is given by 
%$$
%\int_{\hat X} \rho_q^*q' \frac{\betabar}\omega. 
%$$
%Here it's worth noting that $\pi^* q'$ has at least a double root at every preimage of a root of $q$, 
%and $\omega$ has a double zero at every such point, 
%and therefore the integrand is continuous. 

\subsection{Norm comparisons.}
Consider a tangent vector to a principal stratum of the form $\eta=\betabar$, where $\betabar \in H^{1,0}_{-1}(\hat{X}).$ Keeping in mind Corollary \ref{C:principal}, the Beltrami differential $\mu = \betabar/\omega$ can be viewed as the tangent vector $D\pi(\eta)$ via pairing with cotangent vectors. We now turn to  comparisons between the Hodge norm 
$$
\norm{\beta}_{\hodge} =\sqrt{ \int_{\hat X} |\beta|^2}
$$
and the Teichmuller norm
$$
\norm{[\mu]}_{\teich} 
= \sup_{\norm{q'} = 1} \int_X q' \mu
= \sup_{\norm{q'} = 1} \int_{\hat X}\frac12\qq\frac{\betabar}\omega. 
$$

%$$
%\norm{\mu} = \sup_{\norm{q'} = 1} \int_X q' \mu.
%$$
\begin{thm}
We have
\begin{equation} \label{eq:basic-hodge-teich}
 \norm{[\mu]}_{\Teich}
 \ge
 \frac{\norm{\beta}_{\hodge}} {\norm{\omega}_{\hodge}}
\end{equation}
\end{thm}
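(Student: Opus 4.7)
The plan is to apply the definition of $\norm{[\mu]}_{\Teich}$ as a supremum by exhibiting a single well-chosen test quadratic differential $q'$ on $X$ for which $\int_X q' \mu / \norm{q'}_{\Teich}$ already achieves the claimed lower bound. The natural candidate is the one that ``cancels'' the $\omega$ in the denominator of $\mu = \betabar/\omega$ and turns the pairing into a Hodge inner product.

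Concretely, my first step is to take the quadratic differential $\omega \beta$ on $\hat X$. Since $\tau^*\omega = -\omega$ and $\tau^*\beta = -\beta$ (as $\beta \in H^{1,0}_{-1}(\hat X)$), the product $\omega\beta$ is $\tau$-invariant and hence descends to a holomorphic quadratic differential $q'$ on $X$ with $\rho_q^*(q') = \omega\beta$. Using the formula for the pairing supplied just before the theorem statement,
\[
\int_X q'\mu = \int_{\hat X} \tfrac12 \rho_q^*(q') \frac{\betabar}{\omega} = \tfrac12 \int_{\hat X} \omega\beta\cdot \frac{\betabar}{\omega} = \tfrac12 \int_{\hat X} |\beta|^2 = \tfrac12 \norm{\beta}_{\hodge}^2.
\]

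Second, I would compute $\norm{q'}_{\Teich}$. Since $\rho_q$ is a degree-two cover,
\[
\norm{q'}_{\Teich} = \int_X |q'| = \tfrac12 \int_{\hat X} |\omega\beta| = \tfrac12 \int_{\hat X} |\omega|\,|\beta|,
\]
where the pointwise moduli are taken with respect to the natural area forms. Then the Cauchy–Schwarz inequality for the $L^2$ pairing on $\hat X$ gives
\[
\int_{\hat X} |\omega|\,|\beta| \le \norm{\omega}_{\hodge}\,\norm{\beta}_{\hodge}.
\]

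Finally, putting these together,
\[
\norm{[\mu]}_{\Teich} \ge \frac{\int_X q'\mu}{\norm{q'}_{\Teich}} = \frac{\norm{\beta}_{\hodge}^2}{\int_{\hat X}|\omega|\,|\beta|} \ge \frac{\norm{\beta}_{\hodge}^2}{\norm{\omega}_{\hodge}\norm{\beta}_{\hodge}} = \frac{\norm{\beta}_{\hodge}}{\norm{\omega}_{\hodge}},
\]
which is the inequality \eqref{eq:basic-hodge-teich}. There is no real obstacle here; the only delicate point is bookkeeping the factor of $\tfrac12$ coming from the double cover (which appears in both numerator and denominator and thus cancels), and verifying that $\omega\beta$ is genuinely $\tau$-invariant so that the descended $q'$ is a legitimate test object on $X$.
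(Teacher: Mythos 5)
Your proof is correct and takes essentially the same approach as the paper: both test the supremum against the quadratic differential obtained from $\omega\beta$ (the paper takes $q'=(\rho_q)_*(\omega\beta)$, so $\rho_q^*q'=2\omega\beta$, which is twice your choice, but the factor of $2$ cancels in the ratio), compute that the pairing with $\mu$ gives $\norm{\beta}_{\hodge}^2$ up to that constant, and bound $\norm{q'}$ by Cauchy--Schwarz.
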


The reader should keep in mind that the normalization $\norm{q} = 1$ corresponds to $\norm{\omega}_{\hodge} = \sqrt 2$. From now on we will omit the subscript ``Hodge", since the only norm we will consider for Abelian differentials is the Hodge norm.

%There is a unique meromorphic quadratic differential $\eta$ on $X$ such that $\beta^2 = \rho_q^*\eta$;
%it has poles where $q$ has roots,  all its roots are of order 2, and $\norm{\beta}_2^2 = 2\norm{\eta}_1$.
%So, when $\norm{q} = 1$,  we can write \eqref{eq:basic-hodge-teich} as
%\begin{equation}
%\norm{[\mu]}_{\Teich} \ge \sqrt{\norm{\eta}}.
%\end{equation}
%\ann{A May 17: I commented out a paragraph, which had conflicted notation and a note that you weren't sure what to do with it}

\begin{proof}
Let
$
q' = (\rho_q)_* (\omega\beta),
$,
where $(\rho_q)_*$ is defined by summing over fibers,
so $\omega\beta = \frac12 \rho_q^* q'$. 
Then $\norm{q'} = \norm{\omega\beta} \le \norm{\omega}\norm{\beta}$  by the Cauchy-Schwartz inequality. 
On the other hand,
$$
\int_X q' \mu = \int_{\hat X} \omega\beta \frac{\betabar}\omega = \int_{\hat X} \beta\betabar = \norm{\beta}^2. 
$$
Therefore
$$
\norm{[\mu]}_{\Teich} \ge\frac{\norm{\beta}^2}{\norm{q'}}
\ge \frac{\norm{\beta}^2}{\norm{\beta}{\norm{\omega}}} = \frac{\norm{\beta}}{\norm{\omega}}. \qedhere
$$

%Therefore
%\begin{eqnarray*}
%\left\|\frac{\betabar}{\omega}\right\|_{\Teich} 
%&=& \sup_{||q'||=1} \int_{\hat X}\rho_q^*q' \frac{\betabar}{\omega} \\
%&=&  \sup \left\{ \int_{\hat X}\alpha\betabar : {\norm{\alpha \omega} = 1} \right\}.
%\end{eqnarray*}\ann{A May 5: 1, or 2? Area doubles in the double cover.}
%
%By the Cauchy-Schwarz inequality,
%$$
%\left(\int \alpha \omega\right)^2 \leq \int |\alpha|^2 \cdot \int |\omega|^2.
%$$
%Therefore $\|\alpha\|^2\leq 1/\|\omega\|^2 \implies \|\alpha \omega\|^2 \leq 1$, or in other words,
%$$
%\{\alpha: \|\alpha\|^2\leq 1/\|\omega\|^2 \} \subset \{\alpha: \|\alpha\omega\|^2\leq 1 \}.
%$$ 
%Therefore 
%$$
%\left\|\frac{\betabar}{\omega}\right\|_{\Teich}  
%\geq  
%\sup \left\{\int_{\hat X} \alpha {\betabar} : \|\alpha\|^2 \leq 1/\|\omega\|^2\right\} = \|\betabar\|_{\Hodge}/\|\omega\|.
%$$
%Note that the final sup is realized at $\alpha=\beta/(\norm{\beta}\norm{\omega})$. 
\end{proof}

For the other direction it will be helpful to observe the following, for any holomorphic function $f\from D_r\to \C$,
where $D_r$ denotes the disk of radius $r$. 
\begin{align}
|f(0)| &\le \frac{\int_{D_r} |f(z)|}{\pi r^2} \\
&\le \sqrt{\frac{\int_{D_r} |f(z)|^2}{\pi r^2}} \\
&= \frac{\sqrt{\int_{D_r} |f(z)|^2}}{\sqrt \pi r}. \label{eq:sup-l2}
\end{align}
In particular, for any $z \in \hat X$ that isn't a root of $\omega$,
we have
\begin{equation} \label{eq:sup-l2-omega}
\left|\frac\beta\omega(z)\right| \le \frac{\norm{\beta}}{\sqrt\pi r_\omega(z)} \le \frac{\norm{\beta}}{r_\omega(z)}.
\end{equation}
where $r_\omega(z)$ is the radius of the largest (open) embedded Euclidean $\omega$-disk around $z$. 
We will also an upper bound of $\beta$ near a root of $\omega$, 
which we will package into the following. 
\begin{lem} \label{lem:int-beta}
Suppose that $z_0$ is an order $n$ root of $\omega$, 
and the $\omega$-disk of radius $2r$ around $z_0$ is embedded, 
without any other singularities of $\omega$.
Then on the $\omega$-disk of radius $r$,
\begin{equation} \label{eq:int-beta1}
\left| \int_{z_0}^z \beta \right|
 \le \norm{\beta}\ln 2(n+1),
\end{equation}
and when $n = 2$,
\begin{equation} \label{eq:int-beta2}
\abs{ \int_{z_0}^z \beta } \le \norm{\beta}.
\end{equation}
\end{lem}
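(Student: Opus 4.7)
The plan is to reduce the lemma to the Bergman-type bound \eqref{eq:sup-l2} applied in a suitable local chart. Since $\bigl|\int_{z_0}^z \beta\bigr| = \bigl|\int_{z_0}^z \overline{\beta}\bigr|$ and $\norm{\beta} = \norm{\overline{\beta}}$, I would first pass to the complex conjugate and assume $\beta$ is a holomorphic 1-form. Then I would normalize coordinates by choosing a holomorphic chart $w$ centered at $z_0$ with $\omega = w^n\,dw$; such a chart is obtained by setting $w = \bigl((n+1)\zeta\bigr)^{1/(n+1)}$, where $\zeta$ is a local antiderivative of $\omega$. In this chart the $\omega$-disk of radius $\rho$ about $z_0$ is the Euclidean disk $\{|w|<R_\rho\}$ with $R_\rho := \bigl((n+1)\rho\bigr)^{1/(n+1)}$, and the embeddedness hypothesis says precisely that the chart is a biholomorphism on $\{|w|<R\}$ with $R := R_{2r}$.

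Write $\beta = g(w)\,dw$ with $g$ holomorphic on $\{|w|<R\}$; the Hodge-norm bound then reads $\int_{\{|w|<R\}}|g|^2\,dx\,dy \le \norm{\beta}^2$. Applying \eqref{eq:sup-l2} on the disk of radius $R - |w|$ around an interior point $w$ yields the pointwise estimate
$$
|g(w)| \le \frac{\norm{\beta}}{\sqrt\pi\,(R-|w|)}.
$$
For any $z$ in the $\omega$-disk of radius $r$, I would then integrate $|g|$ along the Euclidean radial segment from $0$ to $w(z)$ in the $w$-chart (which remains inside the embedded region) to obtain
$$
\Bigl|\int_{z_0}^z \beta\Bigr| \le \int_0^{|w(z)|} \frac{\norm{\beta}\,ds}{\sqrt\pi(R-s)} \le \frac{\norm{\beta}}{\sqrt\pi}\,\ln\frac{2^{1/(n+1)}}{2^{1/(n+1)}-1},
$$
where the last step uses $|w(z)| \le R_r = R\,2^{-1/(n+1)}$.

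It then remains to dominate this universal bound by the two claimed constants. For the sharper inequality \eqref{eq:int-beta2} this is a direct numerical check: with $n=2$ one has $\frac{1}{\sqrt\pi}\ln\frac{2^{1/3}}{2^{1/3}-1} \approx 0.89 < 1$. For the general bound \eqref{eq:int-beta1}, the convexity estimate $2^{1/(n+1)} - 1 \ge (\ln 2)/(n+1)$ reduces the task to verifying $\frac{1}{\sqrt\pi}\ln\frac{2(n+1)}{\ln 2} \le \ln\bigl(2(n+1)\bigr)$ for all $n \ge 0$, which is elementary given $\sqrt\pi > 1$. The only step requiring any care is the coordinate normalization, but once the chart is in hand the proof is just \eqref{eq:sup-l2} applied in a disk followed by a single radial integration.
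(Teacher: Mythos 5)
Your proposal is correct and follows essentially the same route as the paper: normalize $\omega$ in a local holomorphic chart so the $\omega$-disks become Euclidean disks with radius ratio $2^{-1/(n+1)}$, apply the mean-value/Cauchy--Schwarz bound \eqref{eq:sup-l2} on the largest disk about each point, and integrate radially to get $-\ln\bigl(1-2^{-1/(n+1)}\bigr)$ (up to the $\sqrt\pi$), finishing with an elementary comparison to $\ln\bigl(2(n+1)\bigr)$ and a numerical check for $n=2$. The only differences are cosmetic: a different normalization of the chart ($w^n\,dw$ versus $(n+1)z^n\,dz$) and your retention of the $\sqrt\pi$ factor in the general case, which the paper discards.
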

\begin{proof}
We can find a local coordinate $z$ where $z_0$ %\alert{(perhaps unfortunately chosen variable name)} 
maps to 0, 
and $\omega = (n+1) z^n dz$ in these coordinates. 
In the $z$ coordinate, the $\omega$-disk of radius $2r$ will becomes the disc $D_a$, with $a = (2r)^{1/(n+1)}$.
Similarly the $\omega$-disk of radius $r$ becomes the disc $D_{a'}$, with $a' = r^{1/(n+1)} = 2^{-1/(n+1)} a$. 

For any $z \in D_{a'}$,
we have
$$\beta(z) \le \frac{\norm{\beta}}{a - |z|}$$
by \eqref{eq:sup-l2}.
Therefore, 
for any $z \in D_{a'}$,
\begin{align*}
\left| \int_0^z \beta \right| &\le \norm{\beta} \int_0^{|z|} \frac1 {a - t} dt \\
&\le  \norm{\beta} \int_0^{a'} \frac1 {a - t} dt \\
&= \norm{\beta} \ln \frac a{a - a'} \\
&= \norm{\beta} (-\ln  (1 - 2^{-1/(n+1)}))\\
&\le  \norm{\beta} \ln 2(n+1). 
\end{align*}
In the case where $n = 2$, 
we can use the $\sqrt \pi$ from \eqref{eq:sup-l2} and a direct numerical estimate of $-\ln  (1 - 2^{-1/(n+1)})$
to obtain \eqref{eq:int-beta2}. 
\end{proof}

\begin{thm}
Suppose that the $\omega$-distance between any two zeroes of $\omega$ is at least $2r$. Then
$$\norm{\frac{\betabar}\omega}_{\Teich} \le \frac {4\norm{\beta}} r.$$
\end{thm}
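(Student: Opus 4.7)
The plan is to fix a quadratic differential $q'\in Q(X)$ with $\|q'\|=1$ (so $\tilde q := \rho_q^*(q')$ satisfies $\|\tilde q\|_{L^1}=2$) and estimate
$$\tfrac12\left|\int_{\hat X}\tilde q\,\frac{\bar\beta}{\omega}\right|,$$
taking the supremum over $q'$ at the end. The structural observation I would exploit is that $\alpha := \tilde q/\omega$ is a genuine holomorphic $1$-form on $\hat X$: in the principal stratum, every ramification point of $\rho_q$ is a double zero of $\omega$, matched by a double zero of $\tilde q$ (since $q'$ is holomorphic on $X$), so the apparent poles of $\alpha$ are removable. Thus $\alpha\in H^{1,0}_{-1}(\hat X)$ as in Remark \ref{R:fiber}, and in particular $|\alpha|$ is subharmonic on all of $\hat X$.

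Following the split already built into \eqref{eq:main}, I would write $\hat X = \hat X'\sqcup\bigcup_j B_j$, where $B_j$ is the $\omega$-disc of radius $r$ about the $j$-th zero; by hypothesis these discs are disjoint, and every point of $\hat X'$ satisfies $r_\omega\ge r$. On $\hat X'$, the pointwise bound \eqref{eq:sup-l2-omega} gives $|\bar\beta/\omega|\le \|\beta\|/(\sqrt\pi\, r)$, and integrating against $|\tilde q|$ (total mass $2$) yields a bulk contribution $\lesssim \|\beta\|/r$. On each $B_j$ I would invoke Stokes' theorem, using $d\bar F_j=\bar\beta$ together with $d\alpha=0$ to rewrite
$$\int_{B_j}\alpha\,\bar\beta = \tfrac1{2i}\int_{\partial B_j}\bar F_j\,\alpha,$$
which is precisely the residue-style boundary term appearing in \eqref{eq:main}. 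Lemma \ref{lem:int-beta} in the case $n=2$ then supplies the sharp uniform bound $|F_j|\le\|\beta\|$ on $\partial B_j$ (this is why the $\sqrt\pi$-refined version \eqref{eq:int-beta2} is the relevant one), so the remaining task is to control $\int_{\partial B_j}|\alpha|$ and then sum over $j$.

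To bound $\int_{\partial B_j}|\alpha|$ I would apply the sub-mean-value inequality for the subharmonic function $|\alpha|$ on embedded Euclidean $\omega$-discs of radius $r/2$ around points of $\partial B_j$; such discs exist because any point of $\partial B_j$ has $\omega$-distance at least $r$ from every zero. Interchanging the order of integration by Fubini, the resulting double integral reduces to an integral of $|\alpha|$ against a bounded density over the annular neighborhood $\{r/2\le d_\omega(\cdot, z_j)\le 3r/2\}$, and in $\omega$-coordinates this integral equals $\|\tilde q\|_{L^1}$ restricted to that annulus, since $|\alpha(W)|=|\tilde q(W)|$ pointwise in $\omega$-coordinates. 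Because the distance hypothesis forces these annular neighborhoods to have bounded overlap, summing over $j$ produces a total local contribution $\lesssim\|\beta\|/r$; combined with the bulk term one obtains the desired bound of the form $4\|\beta\|/r$.

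The principal obstacle is the numerical bookkeeping needed to land exactly on the constant $4$: the sub-mean-value step for $\int_{\partial B_j}|\alpha|$ is lossy unless the cone geometry at a double zero (angle $6\pi$, boundary length $6\pi r$) is tracked carefully and the annular overlap constant is estimated sharply from the distance hypothesis. One also has to carry the factor $1/(2i)$ from convention \eqref{eq:ridiculous} correctly through Stokes, and to balance the $\sqrt\pi$ saved in the bulk via \eqref{eq:sup-l2-omega} against the constants lost in the local estimate, so that the two contributions sum to $4\|\beta\|/r$ rather than something worse.
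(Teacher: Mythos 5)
Your route is genuinely different from the paper's. The paper does not split the surface and estimate the pairing integral directly; instead it corrects the Beltrami differential by a coboundary: it builds cutoff functions $p_i$ supported in the $r$-discs around the zeros with $\norm{\partial p_i/\omega}_\infty<(2+\epsilon)/r$, sets $v=\bigl(\sum_i p_i\int_{z_i}\betabar\bigr)/\omega$, notes that $\int_{\hat X}\dbar v\, q''=0$ because $q''/\omega$ is holomorphic, and then bounds $\norm{\betabar/\omega}_{\Teich}$ by $\norm{\betabar/\omega-\dbar v}_\infty$. Every constant is then a transparent pointwise estimate: $0$ inside radius $r/2$, at most $2\norm{\beta}/r+(2+\epsilon)\norm{\beta}/r$ on the annulus (via \eqref{eq:sup-l2-omega} and \eqref{eq:int-beta2}), and at most $\norm{\beta}/r$ outside. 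Your plan shares the two key inputs (\eqref{eq:sup-l2-omega} and the $n=2$ case of Lemma \ref{lem:int-beta}) and the observation that $q''/\omega$ is holomorphic, but the mechanism --- bulk integral against $|\tilde q|$ plus a Stokes boundary term $\frac1{2i}\int_{\partial B_j}\overline{F_j}\,\alpha$ on each disc --- is a different decomposition.

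The gap is that the constant $4$ is never established, and the steps you defer are exactly the ones that could push the bound above $4$. Your bulk term costs about $\norm{\beta}/(\sqrt\pi\, r)$, which is fine; but the local term requires (i) an explicit arc-length constant in the sub-mean-value/Fubini step for $\int_{\partial B_j}|\alpha|$, where $\partial B_j$ is a circle of circumference $6\pi r$ around a cone point of angle $6\pi$ and the intersection of a $\omega$-disc of radius $r/2$ with that circle need not be a single short arc, and (ii) a bounded-overlap constant $N$ for the annuli $\{r/2\le d_\omega(\cdot,z_j)\le 3r/2\}$, i.e.\ a bound on how many zeros, pairwise $2r$-separated, can lie within $3r/2$ of a point of a flat cone surface --- this is not the Euclidean packing count and needs an argument. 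Tracking these, the local contribution comes out to roughly $\tfrac{2N}{3}\norm{\beta}/r$ times your arc-length constant, and nothing in the proposal shows the total stays below $4\norm{\beta}/r$; you acknowledge this yourself. So as written this proves a bound of the form $C\norm{\beta}/r$ for some unspecified $C$, not the stated theorem. The paper's $\dbar v$-correction avoids the issue entirely because it never needs a boundary integral of $|\alpha|$ or an overlap count: the discs of radius $r$ are disjoint by hypothesis and every estimate is pointwise.
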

\begin{proof}
Let $\epsilon >0$ be arbitrary. 
For each zero $z_i$ of $\omega$,
we can define a $C^1$ function $p_i \from \hat X \to [0, 1]$ with
\begin{equation} \label{eq:p-r}
\norm{\partial p_i/\omega}_\infty < (2+\epsilon)/r. 
\end{equation}
which is 1 on the disk of radius $r/2$ around $z_i$ and supported in the open disk of radius $r$.
We then define a vector field $v = f/\omega$, 
where $f =  \sum_i p_i \int_{z_i} \betabar$;
we have 
$$\dbar v = \frac1\omega \sum_i \overline{p_i \beta + \partial p_i  \int_{z_i} \beta}.$$
If $q'$ is a norm 1 quadratic differential on $X$
then $q'' \equiv \frac12 \rho_q^* q'$ has double roots at the $z_i$,
so $q''/\omega$ is holomorphic.  
Therefore
\begin{align*}
\int_{\hat X} \dbar v\, q''
= \int_{\hat X} \dbar\! f\, \frac{q''}\omega = 0,
\end{align*}
and hence
\begin{align*}
\norm{\frac{\betabar}\omega}_{\Teich} 
&= \sup_{\norm{q'} = 1} \frac12 \int_{\hat X} q'' \frac{\betabar}\omega \\
&= \sup_{\norm{q'} = 1} \frac12 \int_{\hat X} q''  \left(\frac{\betabar}\omega - \dbar v\right)\\
&\le \norm{\frac{\betabar}\omega - \dbar v}_{\infty}.
\end{align*}
We can bound the latter norm as follows. 
In the disk of radius $r/2$ around $z_i$,
it is zero. 
%We have 
%\begin{align}
%\dbar v &= \dbar (p\int \betabar) = \dbar p \overline{\int \beta} - p \betabar$ \\
%&\le \frac{2+\epsilon}r \norm{\beta} + |\beta|
%\end{align}
By \eqref{eq:sup-l2-omega} and \eqref{eq:int-beta2},
in the annulus around $z_i$ with radius between $r/2$ and $r$,
\begin{align*}
\left| \frac{\betabar}\omega - \dbar v \right|  
&= \abs{  \frac{\overline{\beta -p_i \beta-(\partial p_i) \int_{z_i}\beta}}{\omega} } \\
&\le (1-p_i) \abs{ \frac {\betabar}\omega } + \abs{ \frac{\partial p_i}{\omega}} \abs{\int_{z_i}\beta} \\
&\le \frac {2 \norm \beta}r + \frac {(2 + \epsilon)\norm\beta} r = \frac {(4+\epsilon)\norm\beta} r.
\end{align*}
Outside of the disks of radius $r$ around the $z_i$,
$\dbar v$ is zero, and $\abs{\betabar / \omega} < \norm{\beta}/r$.
\end{proof}

\bibliography{HT}{}
\bibliographystyle{amsalpha}
\end{document}